\documentclass[11pt]{article}

\usepackage[margin=1in]{geometry}
\usepackage{amsmath,amssymb,amsthm,mathtools}
\usepackage{enumitem}
\usepackage[colorlinks=true,linkcolor=blue,citecolor=blue,urlcolor=blue]{hyperref}

\numberwithin{equation}{section}

\newtheorem{theorem}{Theorem}[section]
\newtheorem{lemma}[theorem]{Lemma}
\newtheorem{proposition}[theorem]{Proposition}

\newtheorem{remark}[theorem]{Remark}

\newcommand{\Om}{\Omega}

\newcommand{\R}{\mathbb R}

\newcommand{\supp}{\operatorname{supp}}
\newcommand{\dist}{\operatorname{dist}}
\newcommand{\pnu}{\partial_\nu}

\newcommand{\calF}{\mathcal F}
\newcommand{\calA}{\mathcal A}

\newcommand{\Lam}{\Lambda}

\title{Simultaneous Recovery of the Initial Source and Sound Speed
for the Wave Equation under a Constitutive Constraint}
\author{ 
    \qquad Amir Moradifam\footnote{Department of Mathematics, University of California, Riverside, California, USA. E-mail: amirm@ucr.edu. Amir Moradifam is supported by NSF grant DMS-1953620.}
}
\date{}

\begin{document}
	\maketitle

\begin{abstract}
We study the simultaneous recovery of the initial source and sound speed for the scalar wave equation from a single boundary measurement. Although the recovery of either parameter separately is well understood under suitable geometric hypotheses, simultaneous recovery remains open in general, and stable recovery is further obstructed by the inherent instability of the linearized problem. We show that both uniqueness and stability can be obtained when the two unknowns are coupled through a prescribed constitutive relation arising from a common underlying material. Under a quantitative nondegeneracy condition, motivated by calibrated material regimes, the coupled inverse problem reduces to a single inverse source problem. Applying the microlocal and Carleman framework of Stefanov and Uhlmann to this reduced problem, we establish global uniqueness and Lipschitz stability under geometric conditions involving strictly convex foliations and geodesic visibility. We also obtain partial-data results, proving local uniqueness and Lipschitz stability in compact visible regions where the constitutive nondegeneracy condition holds.

\end{abstract}

\section{Introduction}
Inverse problems for wave equations arise when waves generated inside a medium are observed on the boundary and one seeks to recover interior information about the medium or the source. In this paper we consider the scalar wave equation

\begin{equation}
\label{eq:intro_wave}
\begin{aligned}
(\partial_t^2-c^2(x)\Delta)u &= 0,
        && (t,x)\in (0,T)\times\R^n,\\
u(0,x) &= f(x),
        && x\in\R^n,\\
\partial_tu(0,x) &= 0,
        && x\in\R^n.
\end{aligned}
\end{equation}

Here $u=u(t,x)$ is the wave field, $c(x)>0$ is the sound speed, and $f(x)$ is the initial source. If $\Om\subset\R^n$ is the region of interest, the measured data are the boundary trace
\[
\Lam_{f,c}=u|_{(0,T)\times\partial\Om}.
\]
The inverse problem is to recover the initial source, the sound speed, or both, from this boundary measurement. Throughout the paper, the speed is assumed to be known outside $\Om$.

Inverse problems of this type arise, in particular, in photoacoustic and thermoacoustic tomography. In these imaging modalities, electromagnetic energy is deposited in a medium and converted, through thermoelastic expansion, into an acoustic wave. When the energy deposition occurs on a time scale much shorter than the subsequent acoustic propagation, it is represented by the initial source $f$, while the resulting wave field propagates according to the homogeneous wave equation \eqref{eq:intro_wave}. These modalities have important applications in biomedical imaging and have been studied extensively from both mathematical and experimental points of view \cite{HaltmeierScherzerBurgholzerPaltauf2004,KrugerKiserReineckeKruger2003,KrugerReineckeKruger1999,KuchmentKunyansky2008,Wang2009,XuWang2006}. Related photoacoustic models also arise in gas sensing \cite{Elia2009,Palzer2020}. Closely related scalar models occur in laser ultrasonics and nondestructive testing, where a short laser pulse generates an elastic or acoustic wave in a solid specimen \cite{ScrubyDrain1990,ZareiPilla2024}.

There is a substantial literature on inverse problems associated with \eqref{eq:intro_wave}, but most results assume that one of the two quantities $f$ or $c$ is already known. When the sound speed is known and non-trapping, the recovery of the initial source is well understood and explicit inversion and reconstruction formulas can be found in \cite{AgranovskyKuchmentKunyansky2009,AmmariBossyJugnonKang2010, FinchRakesh2009,Hristova2009,HristovaKuchmentNguyen2008, KuchmentKunyansky2008,StefanovUhlmann2009,StefanovUhlmann2011,StefanovUhlmann2013SourceSpeed, Wang2009}. Conversely, if the initial source is known, the recovery of a smooth sound speed from one boundary measurement was studied by Stefanov and Uhlmann \cite{StefanovUhlmann2013SourceSpeed}, who proved uniqueness and stability under sharp geometric hypotheses formulated in terms of strictly convex foliations and geodesic visibility. Oksanen and Uhlmann quantified how a modeling error in the sound speed affects known-speed reconstruction and obtained a corresponding stability estimate \cite{OksanenUhlmann2014}.

From the practical point of view, the assumption that the sound speed is known in advance is often unrealistic. In photoacoustic and thermoacoustic imaging, even small errors in the sound speed may produce significant artifacts in the reconstructed source \cite{HristovaKuchmentNguyen2008,JinWang2006,JoseWilleminkEtAl2012}. One approach is to use an auxiliary modality, such as ultrasonic transmission tomography, to estimate the speed before reconstructing the source \cite{JinWang2006,JoseWilleminkEtAl2012}. From both a theoretical and practical point of view, it is therefore natural to ask whether the initial source and the sound speed can be recovered simultaneously from the same boundary measurement.

The simultaneous recovery of both $f$ and $c$ from a single measurement is considerably more delicate, and the known positive results require strong structural assumptions. Finch and Hickmann \cite{FinchHickmann2013} related the problem to the interior transmission eigenvalue problem and obtained range-separation and uniqueness results, in particular for radially symmetric non-trapping sound speeds. Liu and Uhlmann \cite{LiuUhlmann2015} proved simultaneous recovery in special cases, including the recovery of a constant sound speed and a source independent of one spatial variable under an additional positivity assumption. Knox and Moradifam \cite{KnoxMoradifam2020} proved that equality of the boundary measurements implies the integral identity
$$
\int_{\Omega}(c_1^{-2}-c_2^{-2})\varphi\,dx=0
$$
for every harmonic function $\varphi$. This identity does not require any knowledge of the source and yields uniqueness under additional assumptions, for instance if $c_1^{-2}-c_2^{-2}$ is harmonic, or if the two speeds are ordered. Kian and Uhlmann \cite{KianUhlmann2025} proved uniqueness under monotonicity hypotheses for piecewise constant sound speeds; see also the related inclusion-type results in \cite{KianLiu2025,KianTriki2024}.

All known results on simultaneous recovery of the source $f$ and the sound speed $c$ impose restrictive structural assumptions. More importantly, Stefanov and Uhlmann showed that the linearized simultaneous recovery problem is unstable in any scale of Sobolev spaces \cite{StefanovUhlmann2013Instability}. Hence, in the general problem of recovering both $f$ and $c$, perturbations of the source and the sound speed may compensate for each other, making stable simultaneous recovery impossible without additional structure.

The purpose of this paper is to study a class of inverse source-speed problems
in which the initial source and the wave speed are linked by a prescribed
constitutive relation. More precisely, we assume that
\begin{equation}
\label{eq:intro_constitutive}
f(x)=\calF(x,c(x)),
\end{equation}
where $\calF$ is fixed by the admissible material class. This additional
structure reduces the simultaneous recovery of $f$ and $c$ to the recovery of
a single unknown. The key structural assumption is the non-degeneracy condition
\begin{equation}
\label{eq:intro_nd}
|\partial_r\calF(x,r)|\ge m_0>0,
\end{equation}
for the relevant range of speeds and in the region where recovery is sought.
This assumption is intended to model situations in which the source-generation mechanism and the sound speed are governed by the same underlying material.

The results of this paper are primarily structural, as they identify a mathematically precise mechanism by which a constitutive constraint restores uniqueness and Lipschitz stability in a simultaneous inverse problem that is otherwise severely underdetermined. To illustrate how such a constraint may arise, we briefly discuss two calibrated material regimes in which both the source amplitude and the wave speed are governed by a common underlying parameter.\\

\noindent\emph{Calibrated cure monitoring.}
One concrete context in which such a one-parameter description is natural is
cure monitoring for thermosetting resins, adhesives, or composites. Here one
should distinguish the slow processing time of the cure from the acoustic time
variable in the wave equation. At a fixed stage of the cure, let
\(\theta(x)\) denote the local degree of cure. Ultrasonic methods are used in
this setting because wave speed and attenuation are sensitive to the evolving
viscoelastic properties of the curing material
\cite{LionettoMaffezzoli2013}. In particular, the sound velocity is related to
the storage modulus and density, and hence reflects changes in mechanical
stiffness during cure. The experiments reviewed in
\cite{LionettoMaffezzoli2013} show that the velocity curve is not flat
throughout the cure process: after gelation the sound velocity increases
rapidly and later approaches a plateau near vitrification; in the reported
experiments the total velocity increase during the transition is about
\(1200\) m/s. Thus, on a calibrated subinterval of the cure process away from
the initial flat region and the final saturation plateau, it is reasonable to
model the acoustic speed at that fixed cure stage as a locally invertible
response curve
\[
        c(x)=C(x,\theta(x)),
        \qquad
        0<C_0\le |\partial_\theta C(x,\theta)|\le C_1 .
\]

If, in addition, the external excitation mechanism is calibrated and controlled
so that the generated source amplitude is also governed by the same local cure
state, then one may write
\[
        f(x)=P(x,\theta(x)).
\]
For example, in a thermoelastic generation regime the source amplitude depends
on the local mechanical response of the material to the prescribed thermal
loading; see also the discussion of thermoelastic laser ultrasonics below. On a
cure interval where this calibrated source response is non-flat, say
\[
        |\partial_\theta P(x,\theta)|\ge p_0>0,
\]
the cure parameter can be eliminated. Writing \(\theta=\Theta(x,c)\), one gets
\[
        f(x)=P(x,\Theta(x,c(x)))=\calF(x,c(x)),
\]
and
\[
        \partial_r\calF(x,r)
        =
        \frac{\partial_\theta P(x,\theta)}
             {\partial_\theta C(x,\theta)},
        \qquad r=C(x,\theta).
\]
Therefore the non-degeneracy condition
\[
        |\partial_r\calF(x,r)|\ge m_0>0
\]
is a local identifiability condition on the calibrated response curves: on the
selected cure interval, the same material state must affect both the acoustic
speed and the generated source amplitude in a quantitatively nontrivial way.\\

\noindent\emph{Calibrated thermoelastic generation.}
Consider a calibrated laser-ultrasonic experiment on a known material class.
Suppose that the laser-induced thermal strain profile $\varepsilon_T(x)$ is
calibrated independently, for example from a reference experiment or from a
heat-transfer model, and is therefore treated as known in the acoustic inverse
problem.

In a scalar reduction of thermoelastic generation, the initial source amplitude
is proportional to the local thermoelastic stress:
$$
        f(x)=M(x)\varepsilon_T(x),
$$
where $M(x)$ is the relevant scalar elastic modulus. If the density $\rho(x)$
is known and $M(x)=\rho(x)c^2(x)$, then
$$
        f(x)=\rho(x)\varepsilon_T(x)c^2(x).
$$
Thus, with $q(x)=\rho(x)\varepsilon_T(x)$, we obtain
$$
        f(x)=q(x)c^2(x),
        \qquad
        \mathcal F(x,r)=q(x)r^2.
$$
Moreover, the nondegeneracy condition holds on any region where
$q(x)\ge q_0>0$ and $c(x)\ge c_->0$, since
$$
        \partial_r\mathcal F(x,r)=2q(x)r.
$$

Under the geometric hypotheses used for one-measurement source recovery, we prove that equality of the boundary measurements implies equality of both the sound speed and the initial source. More precisely, for compactly supported perturbations and full boundary data, if the metric $g_1=c_1^{-2}dx^2$ admits a strictly convex foliation sweeping the unknown region and the observation time is sufficiently large, then
\[
\Lam_{f_1,c_1}=\Lam_{f_2,c_2}
\quad\Longrightarrow\quad
c_1=c_2,
\qquad
f_1=f_2
\]
in the swept region. Under the corresponding full visibility condition for the geodesic flow, we prove the Lipschitz estimate
\begin{equation*}
\|c_1-c_2\|_{L^2(K)}
+\|f_1-f_2\|_{L^2(K)}
\le C\|\Lam_{f_1,c_1}-\Lam_{f_2,c_2}\|_{L^2((0,T)\times\partial\Om)}.
\end{equation*}
The estimate involves the measured boundary trace difference itself.

We also prove local partial-data uniqueness. Let $\Gamma\subset\partial\Om$ be the observed part of the boundary, and let $\mathcal D_\Gamma$ be the region swept by strictly convex hypersurfaces visible from $\Gamma$ in the appropriate geometric sense. If the constitutive non-degeneracy condition \eqref{eq:intro_nd} holds only in a neighborhood of $\mathcal D_\Gamma$, then equality of the measurements on $(0,T)\times\Gamma$ implies
\[
c_1=c_2,
\qquad
f_1=f_2
\quad\text{in }\mathcal D_\Gamma.
\]
Under a corresponding partial-data stability geometry, we also prove a Lipschitz stability estimate in compact visible subregions. The microlocal source stability step is the partial-data analogue of the argument used in Theorem 3.4 in \cite{StefanovUhlmann2013SourceSpeed}; the constitutive reduction then converts that source estimate into simultaneous stability for the pair $(f,c)$. No monotonicity of $\calF$ is required in the unswept part of the medium. This local formulation is important: the constitutive law may be reliable only in a particular material class or target region, while the surrounding medium may not satisfy the same relation.

The paper is organized as follows. Section~\ref{sec:formulation} develops the full-boundary theory, beginning with the setup and reduction to an inverse source problems and then proving collar uniqueness, global uniqueness, and Lipschitz stability. Section~\ref{sec:partial} treats partial boundary data, the local nature of the constitutive non-degeneracy, and partial-data Lipschitz stability under a microlocal visibility condition.

\section{Recovery from full boundary data}
\label{sec:formulation}
	
	In this section, we formulate the full-boundary inverse problem and reduce the simultaneous recovery of the initial source and sound speed to an inverse source problem. We then establish collar uniqueness, global uniqueness, and Lipschitz stability under the corresponding geometric assumptions.
\subsection{Setup and reduction to an inverse source problem}
\label{sec:full_comparison}
	
	Let $\Om\subset\R^n$, $n\ge2$, be a bounded domain with smooth boundary. We normalize the known exterior speed to be one:
	\begin{equation*}
c(x)=1\qquad\text{for }x\in\R^n\setminus\Om.
	\end{equation*}
	Fix a compact set $K\Subset\Om$. To focus on the interior inverse problem and avoid boundary determination issues, all comparisons in the paper are made inside the class
	\begin{equation}
		\label{eq:support_assumption}
		\supp(c_1-c_2)\subset K.
	\end{equation}
	
	Let $0<c_-<c_+$. The smooth admissible class consists of speeds satisfying
	\begin{equation*}
c\in C^\infty(\R^n),\qquad c_-\le c\le c_+,
		\qquad c=1\text{ in }\R^n\setminus\Om,
	\end{equation*}
	and lying in a fixed bounded set in a sufficiently high $C^N$ norm. We do not attempt to optimize $N$; the smoothness assumption is imposed only to match the regularity required by the Carleman estimates used in \cite{StefanovUhlmann2013SourceSpeed}.
	
	For a speed $c$, let
	\begin{equation*}
P_c=\partial_t^2-c^2(x)\Delta.
	\end{equation*}
	The principal symbol corresponds to the Riemannian metric
	\begin{equation*}
g=c^{-2}(x)dx^2.
	\end{equation*}
	Geometric notions below, such as distance, convexity, geodesics and unit sphere bundles, are understood with respect to this metric. If $E\subset\Om$ is compact, we write $S_gE$ for the restriction of the unit sphere bundle of $g$ to $E$. For $(x,\xi)\in S_gE$, $\gamma_{x,\xi}$ denotes the unit-speed geodesic with initial data $(x,\xi)$.
	
	Let
	\begin{equation*}
		\calF\in C^\infty(\overline\Om\times[c_-,c_+]).
	\end{equation*}
	The initial source corresponding to the speed $c$ is
	\begin{equation*}
f_c(x)=\calF(x,c(x))\qquad x\in\Om,
	\end{equation*}
	extended outside $\Om$ in a fixed known way. The comparison assumption \eqref{eq:support_assumption} ensures that
	\begin{equation*}
\supp(f_{c_1}-f_{c_2})\subset K.
	\end{equation*}
	
	The forward solution $u_c$ is defined by
	\begin{equation*}
\begin{cases}
			P_cu_c=0, & (t,x)\in(0,T)\times\R^n,\\
			u_c(0,x)=f_c(x),\\
			\partial_tu_c(0,x)=0.
		\end{cases}
	\end{equation*}
	The measurement map is
	\begin{equation*}
\Lam_{f_c,c}=u_c|_{(0,T)\times\partial\Om}.
	\end{equation*}

	\paragraph{Constitutive non-degeneracy.}
	We impose the following condition on the constitutive law in the recovery set $K$: there exists $m_0>0$ such that
	\begin{equation*}
|\partial_r\calF(x,r)|\ge m_0,
		\qquad x\in K,
		\quad r\in[c_-,c_+].
	\end{equation*}
	We shall refer to this as constitutive non-degeneracy on $K$.
	
	For each fixed $x\in K$, the derivative $\partial_r\calF(x,\cdot)$ has a fixed sign on $[c_-,c_+]$. Set
	\begin{equation*}
b_0:=\frac{m_0}{2c_+}>0.
	\end{equation*}
	Then the divided difference of $\calF$ with respect to $r^2$ is uniformly bounded below. Indeed, if $r_1,r_2\in[c_-,c_+]$, $r_1\ne r_2$, then
	\begin{equation}
		\label{eq:divided_difference_preliminary}
		\left|
		\frac{\calF(x,r_1)-\calF(x,r_2)}{r_1^2-r_2^2}
		\right|
		=
		\frac1{r_1+r_2}
		\left|
		\int_0^1 \partial_r\calF(x,r_2+s(r_1-r_2))\,ds
		\right|
		\ge b_0.
	\end{equation}

We first prove the structural facts needed to recover the pair $(f,c)$. Throughout, $c_1,c_2$ are smooth admissible speeds and
	\begin{equation}\label{w}
w=u_{c_1}-u_{c_2},
		\qquad
		\alpha=c_1^2-c_2^2.
	\end{equation}
	Notice that $\alpha$ is supported in $K$.
	
	\begin{lemma}
		\label{lem:comparison_factorization}
		The difference $w$ satisfies
		\begin{equation}
			\label{eq:w_equation}
			P_{c_1}w=\alpha(x)\Delta u_{c_2},
		\end{equation}
		with initial data
		\begin{equation}
			\label{eq:w_initial}
			w(0,x)=\calF(x,c_1(x))-\calF(x,c_2(x)),
			\qquad w_t(0,x)=0.
		\end{equation}
		Moreover, if
		\begin{equation}
			\label{eq:B_definition}
			B(x)=
			\begin{cases}
				\displaystyle
				\frac{\calF(x,c_1(x))-\calF(x,c_2(x))}{c_1^2(x)-c_2^2(x)},
				& c_1(x)\ne c_2(x),\\[2ex]
				\displaystyle
				\frac{\partial_r\calF(x,c_1(x))}{2c_1(x)},
				& c_1(x)=c_2(x),
			\end{cases}
		\end{equation}
		then $B$ is smooth on $K$,
		\begin{equation*}
			\calF(x,c_1(x))-\calF(x,c_2(x))=\alpha(x)B(x),
		\end{equation*}
		and
		\begin{equation*}
			|B(x)|\ge b_0\qquad x\in K,
		\end{equation*}
		where $b_0=m_0/(2c_+)$.
	\end{lemma}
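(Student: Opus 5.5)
First I will derive \eqref{eq:w_equation} by direct subtraction. Since $P_{c_1}u_{c_1}=0$ and $P_{c_2}u_{c_2}=0$, we get
\[
P_{c_1}w = P_{c_1}u_{c_1}-P_{c_1}u_{c_2} = -(\partial_t^2-c_1^2\Delta)u_{c_2} = -(\partial_t^2-c_2^2\Delta)u_{c_2}+(c_1^2-c_2^2)\Delta u_{c_2},
\]
and the first term on the right vanishes, leaving $\alpha\Delta u_{c_2}$. The initial conditions \eqref{eq:w_initial} follow at once from $u_{c_j}(0,\cdot)=f_{c_j}=\calF(\cdot,c_j(\cdot))$ in $\Om$ and $\partial_tu_{c_j}(0,\cdot)=0$. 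Outside $\Om$ the two sources share the same fixed known extension, so the difference vanishes there. Inside $\Om\setminus K$ we have $c_1=c_2$, so the difference also vanishes there, and the formula holds everywhere.

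For the factorization I will not treat the two cases in \eqref{eq:B_definition} separately. Instead I use the integral representation of the divided difference already written in \eqref{eq:divided_difference_preliminary}. Set
\[
G(x):=\int_0^1 \partial_r\calF\bigl(x,c_2(x)+s(c_1(x)-c_2(x))\bigr)\,ds .
\]
This is smooth in $x$, because $\calF$ is smooth on $\overline\Om\times[c_-,c_+]$ and the segment between $c_2(x)$ and $c_1(x)$ stays in $[c_-,c_+]$. By the fundamental theorem of calculus, $\calF(x,c_1)-\calF(x,c_2)=G(x)(c_1-c_2)$. Since $c_1+c_2\ge 2c_->0$, I can write $c_1-c_2=\alpha/(c_1+c_2)$, and so
\[
\calF(x,c_1)-\calF(x,c_2)=\alpha B \quad\text{with}\quad B=\frac{G}{c_1+c_2}.
\]
It then remains to check that this $B$ agrees with \eqref{eq:B_definition}. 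Where $c_1\neq c_2$, dividing by $\alpha\ne0$ gives the first line. Where $c_1=c_2$, we get $G=\partial_r\calF(x,c_1)$ and $c_1+c_2=2c_1$, which gives the second line. Hence $B$ is the quotient of smooth functions by a positive smooth function, and so it is smooth (indeed on all of $\Om$, in particular on $K$).

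For the lower bound, fix $x\in K$. By constitutive non-degeneracy and continuity, $\partial_r\calF(x,\cdot)$ has a fixed sign on $[c_-,c_+]$ with modulus at least $m_0$. This gives $|G(x)|\ge m_0$, since the integrand does not change sign. Combined with $c_1+c_2\le 2c_+$, this yields $|B(x)|\ge m_0/(2c_+)=b_0$.

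The only point requiring care is the smoothness of $B$ across the set $\{c_1=c_2\}$, which may have nonempty interior or a complicated boundary. The integral representation avoids any case analysis there.
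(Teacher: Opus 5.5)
Your proof is correct and takes essentially the same route as the paper. You use the same subtraction to get \eqref{eq:w_equation}, and the same integral form of the divided difference from \eqref{eq:divided_difference_preliminary} for the lower bound. Your representation $B=G/(c_1+c_2)$ spells out the smoothness step that the paper covers only by citing ``the usual smooth extension of divided differences,'' and it handles the set $\{c_1=c_2\}$ without any case distinction.
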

	
	\begin{proof}
		Subtract the equations $P_{c_j}u_{c_j}=0$ and write the result with the background operator $P_{c_1}$. Since
		$$
		P_{c_1}u_{c_2}
		=(\partial_t^2-c_1^2\Delta)u_{c_2}
		=(c_2^2-c_1^2)\Delta u_{c_2},
		$$
		we get \eqref{eq:w_equation}. The initial conditions follow from the constitutive relation and the assumption $\partial_tu_{c_j}(0)=0$.
		
		For $c_1\ne c_2$, the factorization identity is the definition of $B$. At points where $c_1=c_2$, the second line in \eqref{eq:B_definition} is the continuous extension of the divided difference because
		$$
		c_1^2-c_2^2=(c_1+c_2)(c_1-c_2).
		$$
		The lower bound is precisely \eqref{eq:divided_difference_preliminary}. Smoothness follows from the smoothness of $\calF$ and the usual smooth extension of divided differences.
	\end{proof}
	
	The next step removes the nonzero initial displacement. This is the technical device that makes the constrained simultaneous source-speed problem fall into the source class in \cite{StefanovUhlmann2013SourceSpeed}.
	
	For the stability argument we need a uniform class for the coefficient multiplying the unknown spatial factor in the source equation obtained below. If $E\Subset\Om$, $b>0$, and $M>0$, let $\mathcal C(E,T,b,M)$ denote the set of functions $a(t,x)$ such that
\[
        \|a\|_{C^2([0,T];C(\overline\Om))}\le M,
        \qquad
        a_t(0,x)=0,
        \qquad
        |a(0,x)|\ge b\quad x\in E.
\]
The condition $a_t(0,\cdot)=0$ is the compatibility condition that permits an even $C^2$ extension in time. This class will be applied to the coefficient $A(t,x)$ in Lemma~\ref{lem:time_primitive} below.

\begin{lemma}
	\label{lem:time_primitive}
	Let
	\begin{equation}
		\label{eq:V_definition}
		V(t,x)=\int_0^t(t-s)w(s,x),ds,
	\end{equation}
	where $w$ is defined by \eqref{w}. Then
	$$
	V(0,x)=V_t(0,x)=0
	$$
	and
	\begin{equation}
		\label{eq:V_source_equation}
		P_{c_1}V=\alpha(x)A(t,x),
	\end{equation}
	where
	\begin{equation}
		\label{eq:A_definition}
		A(t,x)=B(x)+\int_0^t(t-s)\Delta u_{c_2}(s,x),ds.
	\end{equation}
	In particular,
	\begin{equation}
		\label{eq:A_initial_properties}
		A(0,x)=B(x),\qquad A_t(0,x)=0,
		\qquad |A(0,x)|\ge b_0>0\quad x\in K.
	\end{equation}
	
\end{lemma}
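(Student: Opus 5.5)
The plan is to compute directly, using that $V$ is the double time-primitive of $w$, and then invoke Lemma~\ref{lem:comparison_factorization}. Since $V(t,x)=\int_0^t(t-s)w(s,x)\,ds$, differentiation under the integral gives
\[
V_t(t,x)=\int_0^t w(s,x)\,ds,\qquad V_{tt}(t,x)=w(t,x).
\]
Hence $V(0,x)=V_t(0,x)=0$ immediately. For the spatial part, $c_1^2(x)$ does not depend on $t$, so $c_1^2\Delta V=\int_0^t(t-s)\,c_1^2\Delta w(s,x)\,ds$. The Laplacian can be moved inside because $w$ is smooth in $(t,x)$ for smooth admissible data.

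Next we substitute the equation \eqref{eq:w_equation}, written as $c_1^2\Delta w=w_{tt}-\alpha\Delta u_{c_2}$. This gives
\[
P_{c_1}V=w(t)-\int_0^t(t-s)w_{ss}(s)\,ds+\alpha(x)\int_0^t(t-s)\Delta u_{c_2}(s)\,ds.
\]
The key step is to integrate by parts twice in the middle term:
\[
\int_0^t(t-s)w_{ss}\,ds=\bigl[(t-s)w_s\bigr]_0^t+\int_0^t w_s\,ds=-t\,w_t(0)+w(t)-w(0).
\]
By \eqref{eq:w_initial} we have $w_t(0)=0$, so this equals $w(t)-w(0)$. Therefore
\[
P_{c_1}V=w(0,x)+\alpha\int_0^t(t-s)\Delta u_{c_2}\,ds.
\]
The factorization $w(0,x)=\calF(x,c_1)-\calF(x,c_2)=\alpha B$ from Lemma~\ref{lem:comparison_factorization} then yields \eqref{eq:V_source_equation} with $A$ as in \eqref{eq:A_definition}. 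One point needs care: $B$ is defined and smooth only on $K$. However, $\alpha$ vanishes outside $K$, and there $w(0)=0$ as well, since $\supp(f_{c_1}-f_{c_2})\subset K$. So the identity holds globally once $B$ is extended in any bounded way, for instance smoothly, off $K$.

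Finally, the properties \eqref{eq:A_initial_properties} follow from the definition of $A$. The integral term in $A$ vanishes at $t=0$, and its $t$-derivative $\int_0^t\Delta u_{c_2}\,ds$ also vanishes at $t=0$. This gives $A(0)=B$ and $A_t(0)=0$. The lower bound $|A(0,x)|\ge b_0$ on $K$ is then exactly the bound $|B|\ge b_0$ from Lemma~\ref{lem:comparison_factorization}.

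The main obstacle is only bookkeeping. We must justify differentiating under the integral, which follows from the smoothness of $u_{c_j}$ given smooth admissible $c_j$ and $f_{c_j}$, and we must handle $B$ outside $K$ as described above.
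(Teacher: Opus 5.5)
Your proposal is correct and follows essentially the paper's argument: compute $V_{tt}=w$, integrate $\int_0^t(t-s)w_{ss}\,ds$ by parts using $w_t(0)=0$, and substitute $w(0)=\alpha B$ and $P_{c_1}w=\alpha\Delta u_{c_2}$ from Lemma~\ref{lem:comparison_factorization}. The paper first isolates the identity $P_{c_1}V=w(0)+\int_0^t(t-s)P_{c_1}w\,ds$ and substitutes afterwards, which only reorders your steps, and your remark about extending $B$ off $K$ (where $\alpha$ and $w(0,\cdot)$ both vanish) is harmless extra care the paper leaves implicit.
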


\begin{proof}
	The zero initial conditions for $V$ follow directly from
	\eqref{eq:V_definition}. Since
	$$
	V_{tt}(t,x)=w(t,x),
	\qquad
	\Delta V(t,x)=\int_0^t(t-s)\Delta w(s,x),ds,
	$$
	we have
	\begin{equation} \label{P_c}
	P_{c_1}V(t,x)
	=
	w(t,x)-c_1^2(x)\int_0^t(t-s)\Delta w(s,x),ds.
	\end{equation}
	On the other hand,
	$$
	\begin{aligned}
		\int_0^t(t-s)P_{c_1}w(s,x),ds
		&=
		\int_0^t(t-s)w_{tt}(s,x),ds
		-c_1^2(x)\int_0^t(t-s)\Delta w(s,x),ds.
	\end{aligned}
	$$
	Integrating by parts in the first term and using $w_t(0,x)=0$, we obtain
	$$
	\int_0^t(t-s)w_{tt}(s,x),ds
	=
	w(t,x)-w(0,x).
	$$
	Therefore,
		$$
	\begin{aligned}
		w(t,x)-c_1^2(x)\int_0^t(t-s)\Delta w(s,x),ds=w(0,x)+\int_0^t(t-s)P_{c_1}w(s,x),ds.
	\end{aligned}
	$$
	Combining this with \eqref{P_c} yields 

	$$
	P_{c_1}V(t,x)
	=
	w(0,x)+\int_0^t(t-s)P_{c_1}w(s,x),ds.
	$$
	
	By Lemma~\ref{lem:comparison_factorization},
	$$
	w(0,x)=\alpha(x)B(x),
	\qquad
	P_{c_1}w(t,x)=\alpha(x)\Delta u_{c_2}(t,x).
	$$
	Hence
	$$
	\begin{aligned}
		P_{c_1}V(t,x)
		&=
		\alpha(x)B(x)
		+
		\alpha(x)\int_0^t(t-s)\Delta u_{c_2}(s,x),ds  \\
		&=
		\alpha(x)
		\left(
		B(x)+\int_0^t(t-s)\Delta u_{c_2}(s,x),ds
		\right)\\
		&=
		\alpha(x)A(t,x).
	\end{aligned}
	$$
	
	The identities $A(0,x)=B(x)$ and $A_t(0,x)=0$ follow directly from
	\eqref{eq:A_definition}. The lower bound
	$|A(0,x)|\ge b_0$ on $K$ follows from Lemma~\ref{lem:comparison_factorization}.
	
\end{proof}

	Next we show how the measured data enter the equation for $V$.
	
	\begin{lemma}
		\label{lem:boundary_data}
		On $(0,T)\times\partial\Om$,
		\begin{equation}
			\label{eq:V_boundary_trace}
			V(t,\cdot)=\int_0^t(t-s)(\Lam_{f_1,c_1}-\Lam_{f_2,c_2})(s,\cdot)\,ds,
		\end{equation}
		and
		\begin{equation}
			\label{eq:Vtt_boundary_trace}
			\partial_t^2V=\Lam_{f_1,c_1}-\Lam_{f_2,c_2}.
		\end{equation}
		If $\Lam_{f_1,c_1}=\Lam_{f_2,c_2}$ on the full boundary cylinder, then $V=\pnu V=0$ on $(0,T)\times\partial\Om$.
	\end{lemma}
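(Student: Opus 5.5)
The plan is to read off the boundary behaviour of $V$ directly from its definition \eqref{eq:V_definition}. The operation $w\mapsto \int_0^t(t-s)w(s,\cdot)\,ds$ acts only in time, so it commutes with restriction to $\partial\Om$. First I will justify that $w=u_{c_1}-u_{c_2}$ has a well-defined trace on $(0,T)\times\partial\Om$ that is continuous in $t$. This holds because $u_{c_j}$ are smooth solutions of smooth hyperbolic problems with smooth initial data. Since $\Lam_{f_j,c_j}=u_{c_j}|_{(0,T)\times\partial\Om}$ by definition, we have $w|_{(0,T)\times\partial\Om}=\Lam_{f_1,c_1}-\Lam_{f_2,c_2}$. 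Restricting \eqref{eq:V_definition} to $x\in\partial\Om$ then gives \eqref{eq:V_boundary_trace}.

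For \eqref{eq:Vtt_boundary_trace}, I will differentiate \eqref{eq:V_definition} twice in $t$. The Leibniz rule gives $V_t=\int_0^t w(s,x)\,ds$, because the boundary term $(t-t)w(t,x)$ vanishes. A second differentiation gives $V_{tt}=w$; this identity was already used in the proof of Lemma~\ref{lem:time_primitive}. Restricting $V_{tt}=w$ to the boundary and using the trace identity from the first step yields \eqref{eq:Vtt_boundary_trace}.

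For the final claim, suppose $\Lam_{f_1,c_1}=\Lam_{f_2,c_2}$ on $(0,T)\times\partial\Om$. Then $V=0$ on the boundary cylinder by \eqref{eq:V_boundary_trace}. The Neumann trace $\pnu V=0$ does not follow from the Dirichlet data alone, and this is the main step. The plan is to use the exterior region together with finite speed of propagation.

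Outside $\Om$ we have $c_1=c_2=1$, so $\alpha=0$ there. By \eqref{eq:V_source_equation}, $V$ then satisfies $(\partial_t^2-\Delta)V=0$ in $(0,T)\times(\R^n\setminus\overline\Om)$. Its Cauchy data vanish there, since $V(0)=V_t(0)=0$ on all of $\R^n$ by Lemma~\ref{lem:time_primitive}. Its Dirichlet data on $\partial\Om$ vanish, as just shown. Moreover $V$ has finite energy, or is compactly supported in $x$ for each $t$, by finite propagation speed, since the source $\alpha A$ is supported in $[0,T]\times K$.

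Uniqueness for the exterior Dirichlet problem for the free wave equation then gives $V\equiv0$ in $(0,T)\times(\R^n\setminus\overline\Om)$. I would prove this by the standard energy identity on the exterior domain, or by noting that the exterior mixed problem with zero data has only the trivial solution. Because $V$ is smooth across $\partial\Om$, where $c_1$ is smooth and equals $1$ near $\partial\Om$ as $K\Subset\Om$, the normal derivative from the inside equals the one from the outside. Hence $\pnu V=0$ on $(0,T)\times\partial\Om$.

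The only care needed in this step is the regularity of the traces and the uniqueness class for the exterior problem. I expect both to be routine under the smoothness assumptions, using energy estimates on $\R^n\setminus\overline\Om$ intersected with a large ball, with finite speed of propagation controlling the outer boundary.
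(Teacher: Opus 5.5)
Your proposal is correct and follows the paper's proof. The trace identities come from $V_{tt}=w$ and restriction to $(0,T)\times\partial\Om$. The condition $\pnu V=0$ follows because $V$ solves the free wave equation in the exterior with zero Cauchy and Dirichlet data, hence vanishes there. The paper cites this exterior Dirichlet-to-Neumann uniqueness (Lemma 3.1 of Stefanov--Uhlmann), whereas you prove it directly by an energy estimate.

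One harmless slip: the support condition concerns $c_1-c_2$, not $c_1-1$, so $c_1$ need not equal $1$ near $\partial\Om$ inside $\Om$. Matching the interior and exterior normal derivatives only requires $c_1\in C^\infty(\R^n)$, which the admissible class provides.
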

	
	\begin{proof}
	By the definition of $V$,
	\[
		V(t,x)=\int_0^t(t-s)w(s,x)\,ds .
	\]
	Differentiating in time gives
	\[
		V_t(t,x)=\int_0^t w(s,x)\,ds,
		\qquad
		V_{tt}(t,x)=w(t,x).
	\]
	Since $w=u_{c_1}-u_{c_2}$, restriction to $(0,T)\times\partial\Om$ gives
	\[
		V_t(t,\cdot)
		=
		\int_0^t
		(\Lam_{f_1,c_1}-\Lam_{f_2,c_2})(s,\cdot)\,ds
	\]
	and
	\[
		\partial_t^2V|_{(0,T)\times\partial\Om}
		=
		\Lam_{f_1,c_1}-\Lam_{f_2,c_2}.
	\]
	This proves the two trace identities.

	If $\Lam_{f_1,c_1}=\Lam_{f_2,c_2}$ on $(0,T)\times\partial\Om$, then $V=0$ on the boundary. Since $\alpha$ is supported in $K\Subset\Om$, the function $V$ solves the known homogeneous exterior wave equation with zero exterior initial data. The exterior Dirichlet-to-Neumann argument in Lemma 3.1 in \cite{StefanovUhlmann2013SourceSpeed} then gives
	\[
		\partial_\nu V=0
		\qquad\text{on }(0,T)\times\partial\Om .
	\]
	Hence $V=\partial_\nu V=0$ on the full boundary cylinder.
\end{proof}

\subsection{Collar uniqueness}
\label{sec:collar_uniqueness}
	
	The first full-boundary result gives uniqueness in a boundary collar, that is, in the region reached from $\partial\Om$ by the $g_1$-normal distance function before time $T$. It is the simultaneous source-speed analogue of the collar source uniqueness result in Theorem 3.1 in \cite{StefanovUhlmann2013SourceSpeed}.

\begin{theorem}
		\label{thm:collar_uniqueness}
		Let $c_1,c_2$ be smooth admissible speeds satisfying \eqref{eq:support_assumption}, and assume constitutive non-degeneracy on $K$. Set $f_j=\calF(\cdot,c_j)$, $j=1,2$. Let $g_1=c_1^{-2}dx^2$. Suppose $\partial\Om$ is strictly convex with respect to $g_1$, $x^n=\dist_{g_1}(x,\partial\Om)$ is smooth with nonzero differential for $0\le x^n\le T$, and the hypersurfaces $\{x^n=s\}$ are strictly convex for $0\le s<T$. If
		\begin{equation}
			\label{eq:full_data_equal_main}
			\Lam_{f_1,c_1}=\Lam_{f_2,c_2}
			\qquad\text{on }(0,T)\times\partial\Om,
		\end{equation}
		then
		\begin{equation*}
			c_1=c_2,
			\qquad
			f_1=f_2
			\qquad\text{on }K\cap\{x\in\Om:\dist_{g_1}(x,\partial\Om)<T\}.
		\end{equation*}
	\end{theorem}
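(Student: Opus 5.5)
The plan is to reduce Theorem~\ref{thm:collar_uniqueness} to the collar source uniqueness result, Theorem 3.1 in \cite{StefanovUhlmann2013SourceSpeed}, through the function $V$ of Lemma~\ref{lem:time_primitive}. By that lemma, $V$ has zero Cauchy data at $t=0$ and solves
\[
P_{c_1}V=\alpha(x)A(t,x),
\]
where $\alpha=c_1^2-c_2^2$ is supported in $K$. The coefficient $A$ satisfies $A_t(0,\cdot)=0$ and $|A(0,\cdot)|\ge b_0$ on $K$. Under the hypothesis \eqref{eq:full_data_equal_main}, Lemma~\ref{lem:boundary_data} gives $V=\pnu V=0$ on $(0,T)\times\partial\Om$. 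This is exactly the setting of a source problem $P_{c_1}V=\alpha A$ with known time-dependent weight $A$, unknown spatial factor $\alpha$, and vanishing Cauchy data on the lateral boundary.

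The first step is to put $V$ into the form required by the Carleman argument. I would extend $V$ evenly in $t$ to $(-T,T)$, and extend $A$ evenly as well. Since $V(0)=V_t(0)=0$ and $A_t(0)=0$, the extended $V$ is $C^2$ across $t=0$ and still satisfies the equation there. The coefficient $A$ is smooth, because $u_{c_2}$ is smooth for smooth data; this places $A$ in a class $\mathcal C(K,T,b_0,M)$.

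The second step is the unique continuation argument. I would use the $g_1$-distance coordinate $x^n$, with weight $\phi=(T-x^n)^2-t^2$ or a pseudo-convex modification of it. The strict convexity of the level sets $\{x^n=s\}$ for $s<T$ gives the pseudo-convexity needed for a Carleman estimate for $P_{c_1}$ in the collar. With zero Cauchy data on $\partial\Om$, the standard Bukhgeim--Klibanov argument then controls $\alpha$. One differentiates the equation in $t$ and evaluates at $t=0$, where $\partial_t^2 V(0)=A(0)\alpha/1$ up to the $V(0)=0$ terms, so that $V_{tt}(0,x)=\alpha(x)A(0,x)$. Combining this with the lower bound $|A(0)|\ge b_0$ and absorbing the $\alpha A$ and $\alpha A_t$ terms for a large Carleman parameter, one gets $\alpha=0$ in the layer $\{x^n<s\}$. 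A continuity argument in $s$ then sweeps up to any $s<T$.

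I expect this second step to be the main obstacle: verifying that the hypotheses of Theorem 3.1 in \cite{StefanovUhlmann2013SourceSpeed} apply with a time-dependent weight $A$ rather than a time-independent one. The absorption step needs $\alpha$ to be controlled by the $t=0$ trace, and the $\int (t-s)\Delta u_{c_2}$ part of $A$ must be bounded in $C^2$. My first approach would be to invoke that theorem directly, since its source class allows a coefficient of the form $a(t,x)$ with $a(0)\neq0$. If that fails, I would rerun its proof, noting that only the bounds $\|A\|_{C^2}\le M$ and $|A(0)|\ge b_0$ are used.

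Finally, $\alpha=0$ gives $c_1^2=c_2^2$, and hence $c_1=c_2$ by positivity. Then $f_1-f_2=\alpha B=0$ on $K\cap\{\dist_{g_1}(x,\partial\Om)<T\}$.
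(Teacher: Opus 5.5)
Your proposal is correct and follows essentially the same route as the paper. Both arguments use the source equation $P_{c_1}V=\alpha A$ from Lemma~\ref{lem:time_primitive} and the zero lateral Cauchy data from Lemma~\ref{lem:boundary_data}, extend $V$ and $A$ evenly in time, apply Theorem 3.1 of \cite{StefanovUhlmann2013SourceSpeed} directly with background speed $c_1$ to get $\alpha=0$ in the collar, and then use positivity of the speeds and the constitutive relation. Your fallback Bukhgeim--Klibanov sketch only unpacks that cited theorem (the paper does not rerun it), and the stray ``$/1$'' in $V_{tt}(0)=\alpha A(0)$ is a harmless typo.
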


\begin{proof}
		The preceding lemmas put the comparison problem in the precise inverse source form required in \cite{StefanovUhlmann2013SourceSpeed}. Namely, by Lemma~\ref{lem:time_primitive}, the time primitive $V$ satisfies the source equation \eqref{eq:V_source_equation}; explicitly,
		$$
		P_{c_1}V=\alpha(x)A(t,x),
		\qquad V|_{t=0}=V_t|_{t=0}=0,
		$$
		where $\alpha=c_1^2-c_2^2$ is supported in $K$ and $|A(0,x)|\ge b_0$ on $K$. By Lemma~\ref{lem:boundary_data}, equality of the measurements gives zero Cauchy data for $V$ on the boundary. Extending $V$ and $A$ evenly in time, we apply Theorem 3.1 in \cite{StefanovUhlmann2013SourceSpeed} with background speed $c_1$. Thus
		$$
		\alpha=0
		\qquad\text{for }\dist_{g_1}(x,\partial\Om)<T.
		$$
		Since $c_1,c_2>0$, the equality $c_1^2=c_2^2$ implies $c_1=c_2$ in the collar. Because $f_j=\calF(\cdot,c_j)$, we also have $f_1=f_2$ there. This proves simultaneous recovery of the speed and the initial source in the stated set.
	\end{proof}

\subsection{Global uniqueness from full data}
\label{sec:global_full_uniqueness}
	
	We next formulate the global full-boundary foliation condition. Let $\{\Sigma_s:s_1\le s\le s_2\}$ be a continuous family of smooth compact oriented hypersurfaces intersecting $\overline\Om$. Each $\Sigma_s$ is given an interior side $\Sigma_s^{\rm int}$ and an exterior side $\Sigma_s^{\rm ext}$, where the exterior side is the side from which the foliation propagates toward the unknown region.
	
	\paragraph{Full-boundary convex foliation.}
	We say that $K$ satisfies the full-boundary convex foliation condition, with respect to $g_1=c_1^{-2}dx^2$ and the observation time $T$, if there is such a family $\{\Sigma_s:s_1\le s\le s_2\}$ satisfying
	\begin{enumerate}[label=(\roman*)]
		\item each $\Sigma_s\cap\overline\Om$ is strictly convex with respect to $g_1$;
		\item the family sweeps $K$, in the sense that every point of $K$ lies on one of the hypersurfaces, and $K\subset\Sigma_{s_1}^{\rm int}$;
		\item for every $s$ and every $x\in\Sigma_s\cap\overline\Om$ there is $y\in\partial\Om$ such that
		$$
		T>\dist_{g_1}(x,y).
		$$
	\end{enumerate}
	A sufficient condition for (iii) is
	\begin{equation*}
T>\max_s\dist_{g_1}(\Sigma_s\cap\overline\Om,\partial\Om).
	\end{equation*}
	This is the full-boundary convex foliation geometry used for source uniqueness in Theorem 2.3 in \cite{StefanovUhlmann2013SourceSpeed}.

\begin{theorem}
		\label{thm:global_uniqueness}
		Let $c_1,c_2$ be smooth admissible speeds satisfying \eqref{eq:support_assumption}, and set $f_j=\calF(\cdot,c_j)$, $j=1,2$. Assume constitutive non-degeneracy on $K$ and suppose that $K$ satisfies the full-boundary convex foliation condition with respect to $g_1$ and $T$. If \eqref{eq:full_data_equal_main} holds, then
		$$
		c_1=c_2,
		\qquad
		f_1=f_2
		\qquad\text{in }K.
		$$
		Since $\supp(c_1-c_2)\subset K$, the equality $c_1=c_2$ also holds in
$\Om\setminus K$. Hence, by the constitutive relation, $f_1=f_2$ in
$\Om\setminus K$ as well. Therefore
$$
        c_1=c_2,
        \qquad
        f_1=f_2
        \qquad\text{in }\Om .
$$
	\end{theorem}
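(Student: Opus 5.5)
The plan is to mirror the proof of Theorem~\ref{thm:collar_uniqueness}, with the collar result replaced by the global foliation result for sources, Theorem 2.3 in \cite{StefanovUhlmann2013SourceSpeed}. First I would set $w=u_{c_1}-u_{c_2}$ and $\alpha=c_1^2-c_2^2$, and form the time primitive $V$ of Lemma~\ref{lem:time_primitive}. That lemma gives
$$P_{c_1}V=\alpha(x)A(t,x),\qquad V|_{t=0}=V_t|_{t=0}=0,$$
with $\supp\alpha\subset K$, $A_t(0,\cdot)=0$ and $|A(0,\cdot)|\ge b_0$ on $K$. By Lemma~\ref{lem:boundary_data}, the hypothesis \eqref{eq:full_data_equal_main} yields $V=\pnu V=0$ on $(0,T)\times\partial\Om$. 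I would then extend $V$, $A$ and the equation evenly in time to $(-T,T)$. The conditions $A_t(0)=0$ and $V=V_t=0$ at $t=0$ make these extensions $C^2$ in $t$, so the equation holds across $t=0$ and the data vanish on $(-T,T)\times\partial\Om$. This places us exactly in the source class $\alpha(x)A(t,x)$ treated in \cite{StefanovUhlmann2013SourceSpeed}.

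Next I would run the foliation argument, either by citing Theorem 2.3 of \cite{StefanovUhlmann2013SourceSpeed} directly or by reproducing its continuity argument. Let $s^*$ be the infimum of the $s$ such that $\alpha=0$ on $\Sigma_{\sigma}^{\rm ext}\cap\Om$ for all $\sigma>s$. This region is nonempty in the appropriate sense, since $K\subset\Sigma_{s_1}^{\rm int}$ and $\alpha$ vanishes outside $K$. At the leaf $\Sigma_{s^*}$, I would combine three ingredients:
\begin{itemize}
\item unique continuation for $V$ from the exterior side, using zero Cauchy data on $\partial\Om$ together with condition (iii), namely $T>\dist_{g_1}(x,y)$;
\item strict convexity of $\Sigma_{s^*}$ with respect to $g_1$, which supplies a Carleman estimate with a weight built from the leaves, in the spirit of Tataru's unique continuation;
\item the nonvanishing $|A(0,x)|\ge b_0$, which lets one recover $\alpha$ near $\Sigma_{s^*}$ from the restriction of $V_{tt}$ to $t=0$.
\end{itemize}
Together these show that $\alpha$ vanishes in a neighborhood of $\Sigma_{s^*}\cap K$. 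By compactness and continuity of the family, this contradicts minimality unless the whole of $K$ has been swept, so $\alpha=0$ on $K$.

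Finally, since $c_j>0$, the identity $c_1^2=c_2^2$ gives $c_1=c_2$ on $K$, and hence on all of $\Om$ by \eqref{eq:support_assumption}. The constitutive relation $f_j=\calF(\cdot,c_j)$ then gives $f_1=f_2$ in $\Om$, which is the second assertion of the statement.

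The main obstacle is checking that the hypotheses of the cited theorem genuinely apply. The coefficient $A$ depends on $u_{c_2}$, which is itself unknown, so $A$ must lie in a uniform class with the regularity the Carleman estimate requires. I would verify $A\in\mathcal C(K,T,b_0,M)$, with $M$ controlled through energy estimates for $u_{c_2}$ in terms of the $C^N$ bounds on $c_2$ and $\calF$. The only structural facts used in \cite{StefanovUhlmann2013SourceSpeed} are the $C^2$ time regularity, $A_t(0)=0$ and $A(0)\ne0$ on the recovery set. Nondegeneracy is needed only on $K$, which is harmless here because $\supp\alpha\subset K$.
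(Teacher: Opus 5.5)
Your proposal is correct and follows essentially the same route as the paper. Lemmas~\ref{lem:time_primitive} and~\ref{lem:boundary_data} reduce the problem to $P_{c_1}V=\alpha A$ with zero Cauchy data on $(0,T)\times\partial\Om$, the condition $A_t(0,\cdot)=0$ justifies the even extension, Theorem~2.3 of \cite{StefanovUhlmann2013SourceSpeed} gives $\alpha=0$ on $K$, and positivity, the support assumption and the constitutive relation finish the argument.

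Two small remarks that do not affect the cited argument. In your optional continuity sketch the sweep runs backwards: since $K\subset\Sigma_{s_1}^{\rm int}$, the known vanishing is on $\Sigma_{s_1}^{\rm ext}$, so the argument should start at $s_1$ and push a supremum toward $s_2$. Also, the uniform bound $A\in\mathcal C(K,T,b_0,M)$ is only needed for stability; for uniqueness, regularity of $A$ for the fixed pair suffices.
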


\begin{proof}
		By Lemma~\ref{lem:time_primitive}, the time primitive $V$ satisfies the source equation \eqref{eq:V_source_equation}; explicitly,
		\begin{equation*}
			P_{c_1}V=\alpha(x)A(t,x),
			\qquad V|_{t=0}=V_t|_{t=0}=0,
			\qquad \alpha=c_1^2-c_2^2.
		\end{equation*}
		Moreover, by \eqref{eq:A_initial_properties},
		\begin{equation*}
			A_t(0,x)=0,
			\qquad |A(0,x)|\ge b_0\quad x\in K.
		\end{equation*}
		The condition $A_t(0,\cdot)=0$ permits the standard even extension in time used in the source recovery theorem.
		
		By Lemma~\ref{lem:boundary_data}, the equality of the full boundary measurements in \eqref{eq:full_data_equal_main} gives
		\begin{equation*}
			V=\partial_\nu V=0
			\qquad\text{on }(0,T)\times\partial\Om .
		\end{equation*}
		The hypotheses of Theorem 2.3 in \cite{StefanovUhlmann2013SourceSpeed} are therefore satisfied for the background speed $c_1$, the spatial source factor $\alpha$, the coefficient $A$, and the foliation $\{\Sigma_s\}$. Applying that theorem gives
		\begin{equation*}
			\alpha=0
			\qquad\text{on the region swept by the foliation}.
		\end{equation*}
		In particular, $\alpha=0$ on $K$. Since $\alpha=c_1^2-c_2^2$ and the speeds are positive, this implies $c_1=c_2$ in $K$. The constitutive relation then gives $f_1=f_2$ in $K$. Finally, the admissible class assumption \eqref{eq:support_assumption} gives $c_1=c_2$ in $\Om\setminus K$, and the constitutive relation gives $f_1=f_2$ there as well. Hence
		\begin{equation*}
			c_1=c_2,
			\qquad
			f_1=f_2
			\qquad\text{in }\Om .
		\end{equation*}
	\end{proof}

\subsection{Lipschitz stability with full boundary data}
\label{sec:full_stability}
	
	For stability one needs a stronger condition than the reachability requirement in the full-boundary foliation condition. The precise condition is the following geodesic visibility condition, which is condition (51) in Theorem 3.4 in \cite{StefanovUhlmann2013SourceSpeed}.
	
	\paragraph{Full-boundary visibility.}
	We say that $K$ satisfies the full-boundary visibility condition, with respect to $g_1$ and the observation time $T$, if every $(x,\xi)\in S_{g_1}K$ reaches the boundary in forward or backward time less than $T$:
	\begin{equation}
		\label{eq:visibility_main}
		\forall (x,\xi)\in S_{g_1}K,
		\quad \exists t\in(-T,T)
		\quad\text{such that}\quad
		\gamma_{x,\xi}(t)\in\partial\Om.
	\end{equation}

\begin{theorem}
		\label{thm:lipschitz_stability}
		Let $c_1,c_2$ lie in a bounded smooth admissible class satisfying \eqref{eq:support_assumption}, set $f_j=\calF(\cdot,c_j)$, $j=1,2$, and suppose constitutive non-degeneracy holds uniformly on $K$. Let $g_1=c_1^{-2}dx^2$. Assume that $K$ satisfies the full-boundary convex foliation condition as well as the full-boundary visibility condition with respect to $g_1$ and $T$. Then
		\begin{equation}
			\label{eq:main_stability}
			\|c_1-c_2\|_{L^2(K)}
			+\|f_1-f_2\|_{L^2(K)}
			\le C\|\Lam_{f_1,c_1}-\Lam_{f_2,c_2}\|_{L^2((0,T)\times\partial\Om)}.
		\end{equation}
		The constant $C$ depends only on the admissible class, the lower bound in the constitutive non-degeneracy condition, the visibility geometry, $K$, and $T$.
	\end{theorem}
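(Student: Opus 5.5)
The plan is to follow the same reduction used for Theorems~\ref{thm:collar_uniqueness} and~\ref{thm:global_uniqueness}. The difference is that instead of the uniqueness theorem we invoke the stability estimate for the inverse source problem, namely Theorem 3.4 in \cite{StefanovUhlmann2013SourceSpeed}. By Lemma~\ref{lem:time_primitive}, the time primitive $V$ solves
\[
P_{c_1}V=\alpha(x)A(t,x),\qquad V|_{t=0}=V_t|_{t=0}=0,
\]
with $\alpha=c_1^2-c_2^2$ supported in $K$. Here $A(0,\cdot)=B$ satisfies $|B|\ge b_0$ on $K$, and $A_t(0,\cdot)=0$. The first task is to check that $A$ lies in a uniform class $\mathcal C(K,T,b_0,M)$, with $M$ depending only on the admissible class.

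This holds because $\|B\|_{C^2}$ is controlled by $\|\calF\|_{C^N}$ and the $C^N$ bounds on $c_j$, since the smooth divided-difference formula is $B=\frac{1}{c_1+c_2}\int_0^1\partial_r\calF(x,c_2+s(c_1-c_2))\,ds$. The term $\int_0^t(t-s)\Delta u_{c_2}\,ds$ is bounded in $C^2([0,T];C(\overline\Om))$ by standard energy estimates and Sobolev embedding. Concretely, $u_{c_2}$ is bounded in $C^k([0,T];H^m)$ for large $m$, uniformly over the admissible class, since $f_{c_2}=\calF(\cdot,c_2)$ is uniformly smooth. We then extend $V$ and $A$ evenly in time, which is allowed by the compatibility condition $A_t(0,\cdot)=0$.

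Next we apply the source stability theorem with background speed $c_1$, using the foliation and visibility hypotheses (condition \eqref{eq:visibility_main} is exactly condition (51) there). This yields
\[
\|\alpha\|_{L^2(K)}\le C\,\|\partial_t^2 V\|_{L^2((0,T)\times\partial\Om)},
\]
or a corresponding estimate in terms of some boundary norm of $V$. The constant is uniform over $c_1$ in the admissible class and $A$ in $\mathcal C(K,T,b_0,M)$. By \eqref{eq:Vtt_boundary_trace}, $\partial_t^2V|_{\partial\Om}=\Lam_{f_1,c_1}-\Lam_{f_2,c_2}$, which gives the right-hand side of \eqref{eq:main_stability}. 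If the cited estimate is phrased in terms of $V|_{\partial\Om}$ or $\partial_t V$ (for instance an $H^1$ norm of the boundary trace), I would instead apply it directly to the equation for $w$ in its time-differentiated form. Alternatively, I would note that \eqref{eq:V_boundary_trace} bounds $V|_{\partial\Om}$ in $H^2$ in time by the $L^2$ data norm, since $V(0)=V_t(0)=0$. I expect some bookkeeping here to match the exact norm in \cite{StefanovUhlmann2013SourceSpeed}.

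The conversion back to $(c,f)$ is elementary. Since $c_1+c_2\ge 2c_-$, we have $|c_1-c_2|\le|\alpha|/(2c_-)$. Also $|f_1-f_2|=|B||\alpha|\le\|B\|_\infty|\alpha|$, and $\|B\|_\infty$ is bounded by $\sup|\partial_r\calF|/(2c_-)$. Summing these bounds gives \eqref{eq:main_stability}.

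The main obstacle is the uniformity of the constant. The source-stability theorem produces a constant depending on the coefficient $A$, and here $A$ itself depends on the unknowns $c_1,c_2$. Theorem 3.4 in \cite{StefanovUhlmann2013SourceSpeed} is proved by a microlocal argument: a parametrix-type estimate at $t=0$, where $A(0,\cdot)$ is elliptic on $K$, together with a compactness/uniqueness argument that removes the lower-order error. I would therefore check that the compactness step runs uniformly over the class $\mathcal C(K,T,b_0,M)$ and a $C^N$-bounded family of $c_1$. Uniqueness along any limiting sequence is supplied by Theorem~\ref{thm:global_uniqueness}, applied in its source form, which is valid for every limit coefficient since the lower bound $|A(0,\cdot)|\ge b_0$ persists under $C^2$ limits.
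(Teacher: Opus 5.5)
Your proposal is correct and follows the paper's proof essentially step for step. You reduce to the source equation $P_{c_1}V=\alpha A$ with $A\in\mathcal C(K,T,b_0,M_A)$ and apply Theorem 3.4 of \cite{StefanovUhlmann2013SourceSpeed} with background speed $c_1$. You then use $\partial_t^2V|_{(0,T)\times\partial\Om}=\Lam_{f_1,c_1}-\Lam_{f_2,c_2}$ and convert via $|c_1-c_2|\le|\alpha|/(2c_-)$ and a Lipschitz bound for $f$; your $|f_1-f_2|=|B||\alpha|$ is equivalent to the paper's bound. One correction: the paper only needs uniformity with respect to $c_2$, with $c_1$ and its foliation and visibility geometry held fixed. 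You should therefore drop your claim of uniformity over all $c_1$ in the admissible class, which your compactness argument would not deliver because those geometric hypotheses need not survive limits.
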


\begin{proof} By Lemma~\ref{lem:time_primitive}, the time primitive $V$ satisfies the source equation \eqref{eq:V_source_equation}, together with the zero initial conditions
		$$
		V|_{t=0}=V_t|_{t=0}=0,
		\qquad
		\alpha=c_1^2-c_2^2.
		$$
		Moreover, the a priori smooth bounds imply that
$$
A\in\mathcal C(K,T,b_0,M_A)
$$
for some $M_A$ depending on the admissible class, $T$, and $\calF$.
		Since $c_1$ is fixed, Theorem~3.4 in
\cite{StefanovUhlmann2013SourceSpeed}, applied with background
speed $c_1$, spatial source factor $\alpha$, and coefficient $A$,
gives
\[
\|\alpha\|_{L^2(K)}
\le C\|\partial_t^2V\|_{L^2((0,T)\times\partial\Om)},
\]
where $C$ is uniform as $c_2$ varies in the admissible class.

		By Lemma~\ref{lem:boundary_data},
		$$
		\partial_t^2V|_{(0,T)\times\partial\Om}
		=
		\Lam_{f_1,c_1}-\Lam_{f_2,c_2}.
		$$
		Therefore
		$$
		\|c_1^2-c_2^2\|_{L^2(K)}
		\le
		C\|\Lam_{f_1,c_1}-\Lam_{f_2,c_2}\|_{L^2((0,T)\times\partial\Om)}.
		$$
		Since $c_j\ge c_->0$,
		$$
		|c_1-c_2|
		\le
		\frac{1}{2c_-}|c_1^2-c_2^2|.
		$$
		Hence
		$$
		\|c_1-c_2\|_{L^2(K)}
		\le
		C\|\Lam_{f_1,c_1}-\Lam_{f_2,c_2}\|_{L^2((0,T)\times\partial\Om)}.
		$$
		Finally, since $f_j=\calF(\cdot,c_j)$ and $\calF$ is uniformly Lipschitz in the sound-speed variable on the admissible range,
		$$
		|f_1(x)-f_2(x)|
		=
		|\calF(x,c_1(x))-\calF(x,c_2(x))|
		\le C|c_1(x)-c_2(x)|.
		$$
		Combining the last two estimates proves \eqref{eq:main_stability}. The constant $C$ may depend on $c_1$ and its visibility geometry,
but is uniform with respect to $c_2$ in the admissible class.
	\end{proof}

\begin{remark}
		The stability estimate uses the measured boundary trace difference itself. Although the source stability estimate in Theorem 3.4 in \cite{StefanovUhlmann2013SourceSpeed} contains $\partial_t^2V$ on the boundary, the construction gives
		$$
		\partial_t^2V|_{(0,T)\times\partial\Om}
		=
		\Lam_{f_1,c_1}-\Lam_{f_2,c_2}.
		$$
		Thus no differentiation of the measured data is lost in the final estimate.
	\end{remark}

\section{Recovery from partial boundary data}
\label{sec:partial}
	
	We now state the partial-data assumptions explicitly. Let $\Gamma\subset\partial\Om$ be relatively open and let
	\begin{equation*}
G=\{(t,y):y\in\Gamma,\ 0<t<\tau(y)\},
	\end{equation*}
	where $\tau$ is positive and continuous. Here $G$ is the open observation set used in the data norm. In the exterior cone condition below we also allow the initial time slice $t=0$, where the exterior solution has zero Cauchy data. In partial data, the exterior Dirichlet trace does not determine the Neumann trace at every point of $G$. The following exterior cone condition is used to identify those boundary points for which the needed Cauchy data are determined by the measured Dirichlet trace and the known exterior equation, as in Theorem 3.2 in \cite{StefanovUhlmann2013SourceSpeed}.
	
	For $(\hat t,\hat y)\in\R_+\times\partial\Om$, define
	\begin{equation*}
C_{\hat t,\hat y}
		=\{(t,y)\in\R_+\times\partial\Om:
		t+\dist_0(y,\hat y)\le \hat t\},
	\end{equation*}
	where $\dist_0$ is the distance in the known exterior $\R^n\setminus\Om$.
	
		Let $\{\Sigma_s:s_1\le s\le s_2\}$ be a continuous family of smooth compact oriented hypersurfaces intersecting $\overline\Om$. Each $\Sigma_s$ is given an interior side $\Sigma_s^{\rm int}$ and an exterior side $\Sigma_s^{\rm ext}$, where the exterior side is the side from which the partial-data propagation starts.
	
	\paragraph{Partial observation geometry.}
	We say that the family $\{\Sigma_s:s_1\le s\le s_2\}$ is an admissible partial-data convex foliation for the observation set $G$, with respect to $g_1=c_1^{-2}dx^2$, if
	\begin{enumerate}[label=(\roman*)]
		\item each $\Sigma_s\cap\overline\Om$ is strictly convex with respect to $g_1$;
		\item $\Sigma_s\cap(\partial\Om\setminus\Gamma)=\emptyset$ for every $s$.
	\end{enumerate}
	For such a family, define the recoverable region
	\begin{equation}
		\label{eq:D_partial}
		\mathcal D_\Gamma
		=\Big\{x\in\Om\cap\bigcup_s\Sigma_s:
		\exists y\in\Gamma\text{ such that }
		C_{\dist_{g_1}(x,y),y}
		\subset \{(t,z):z\in\Gamma,\ 0\le t<\tau(z)\}
		\Big\}.
	\end{equation}
	This is the partial-data foliation and cone geometry used for source uniqueness in Theorem 3.2 in \cite{StefanovUhlmann2013SourceSpeed}.
	
	The following statement is intentionally local. The constitutive law need not be non-degenerate in the whole domain; it is enough that it be non-degenerate on the portion of the medium through which the Carleman propagation is carried out.
	
	\begin{theorem}
		\label{thm:partial_uniqueness}
		Let $c_1,c_2$ be smooth admissible speeds, set $\alpha=c_1^2-c_2^2$, and set $f_j=\calF(\cdot,c_j)$, $j=1,2$. Assume that $\mathcal D_\Gamma$ is defined by an admissible partial-data convex foliation and that the support-side condition
		\begin{equation}
			\label{eq:partial_uniqueness_support}
			\supp\alpha\subset \Sigma_{s_1}^{\rm int}.
		\end{equation}
		Suppose that the constitutive law is non-degenerate only in a neighborhood of the swept recoverable region: there exists an open set $U\Subset\Om$ with $\overline{\mathcal D_\Gamma}\subset U$ and a constant $m_U>0$ such that
		\begin{equation}
			\label{eq:local_constitutive_nd}
			|\partial_r\calF(x,r)|\ge m_U,
			\qquad x\in U,
			\quad r\in[c_-,c_+].
		\end{equation}
		If
		$$
		\Lam_{f_1,c_1}=\Lam_{f_2,c_2}\qquad\text{on }G,
		$$
		then
		$$
		c_1=c_2,
		\qquad
		f_1=f_2
		\qquad\text{in }\mathcal D_\Gamma.
		$$
	\end{theorem}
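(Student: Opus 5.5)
The plan is to repeat the reduction from Section~\ref{sec:formulation}, but make it local: the lower bound on $A(0,\cdot)$ is only needed on the region where the Carleman propagation is carried out.

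\emph{Step 1: the source equation.} Define $w$ and $V$ as in \eqref{w} and \eqref{eq:V_definition}. The identity
\[
P_{c_1}V=\alpha A, \qquad V|_{t=0}=V_t|_{t=0}=0,
\]
from Lemma~\ref{lem:time_primitive} is purely algebraic. It holds wherever the divided difference $B$ is defined.

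\emph{Step 2: the coefficient $B$.} On $U$, the local non-degeneracy \eqref{eq:local_constitutive_nd} and the argument of \eqref{eq:divided_difference_preliminary} give $|B|\ge m_U/(2c_+)$. Outside $U$, I define $B$ by the same formula \eqref{eq:B_definition}. This is still a smooth function, since the smooth extension of divided differences does not need non-degeneracy, and the factorization $\calF(x,c_1)-\calF(x,c_2)=\alpha B$ remains valid everywhere. So $A$ is well defined with $A_t(0,\cdot)=0$. The only thing that fails outside $U$ is the lower bound, and I expect it is not needed there: the Carleman/unique-continuation argument of Theorem~3.2 in \cite{StefanovUhlmann2013SourceSpeed} requires $|A(0,x)|>0$ only on the swept region where $\alpha$ is being shown to vanish. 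Here that region is $\mathcal D_\Gamma\subset U$.

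\emph{Step 3: boundary data.} From $\Lam_{f_1,c_1}=\Lam_{f_2,c_2}$ on $G$, the trace identity \eqref{eq:V_boundary_trace} gives $V=0$ on $G$, because for $(t,y)\in G$ the whole segment $(0,t)\times\{y\}$ lies in $G$. The support-side condition \eqref{eq:partial_uniqueness_support} places $\supp\alpha$ inside $\Om$ on the interior side of the foliation. Outside $\Om$, $V$ therefore solves the known homogeneous wave equation with zero Cauchy data at $t=0$. The exterior cone argument of Theorem~3.2 in \cite{StefanovUhlmann2013SourceSpeed} then yields $\pnu V=0$ on the cones $C_{\dist_{g_1}(x,y),y}$ appearing in \eqref{eq:D_partial}. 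This is exactly the Cauchy data that theorem uses.

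\emph{Step 4: conclusion.} Extend $V$ and $A$ evenly in time and apply Theorem~3.2 of \cite{StefanovUhlmann2013SourceSpeed} with background speed $c_1$ and the admissible partial-data foliation. This gives $\alpha=0$ in $\mathcal D_\Gamma$. Positivity of the speeds gives $c_1=c_2$ there, and the constitutive relation then gives $f_1=f_2$.

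\emph{Main obstacle.} The delicate point is Step~2. One must check that the proof of Theorem~3.2 really uses the nonvanishing of $A(0,\cdot)$ only locally along each leaf $\Sigma_s$ inside $\mathcal D_\Gamma$. I would verify this by running its foliation induction directly. At a point of $\Sigma_s\cap\mathcal D_\Gamma$, the Carleman estimate near that point combined with finite speed of propagation shows that $\partial_t^2 V(0,\cdot)=\alpha A(0,\cdot)$ vanishes in a one-sided neighborhood. Since $|A(0,\cdot)|\ge m_U/(2c_+)$ on $U\supset\overline{\mathcal D_\Gamma}$, this forces $\alpha=0$ there, and the argument then advances to the next leaf. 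The open neighborhood $U$ of $\overline{\mathcal D_\Gamma}$ supplies the margin needed for these local neighborhoods. Any leaf portions lying outside $\mathcal D_\Gamma$ are never required to vanish, so nothing is needed from $A$ there.
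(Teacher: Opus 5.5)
Your proposal is correct and follows the paper's proof essentially step for step. Both use the pointwise factorization with $|B|\ge m_U/(2c_+)$ on $U$, then $V=0$ on $G$ together with the exterior cone argument for $\pnu V$, then Theorem~3.2 of \cite{StefanovUhlmann2013SourceSpeed} along the foliation, and finally positivity of the speeds and the constitutive law to get $c_1=c_2$ and $f_1=f_2$ in $\mathcal D_\Gamma$.

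One small correction to your heuristic for why only local non-degeneracy is needed. Because $A$ depends on $t$, the Carleman (Bukhgeim--Klibanov) step does not first show $\partial_t^2V(0,\cdot)=0$ and then divide by $A(0,\cdot)$. Instead, the lower bound on $A(0,\cdot)$ is used inside the weighted estimate to absorb the terms where $\alpha$ is multiplied by time derivatives of $A$. It is still only needed on small cutoff neighborhoods of points of $\mathcal D_\Gamma$, which lie in $U$, so your conclusion stands.
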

	
	\begin{proof}
		The factorization in Lemma~\ref{lem:comparison_factorization} is pointwise. Under \eqref{eq:local_constitutive_nd}, the divided-difference coefficient satisfies
		$$
		|B(x)|\ge \frac{m_U}{2c_+}\qquad x\in U.
		$$
		Thus, in the source equation \eqref{eq:V_source_equation}, the coefficient $A(t,x)$ satisfies the required non-degeneracy condition at $t=0$ at every point of the swept recoverable region. No lower bound for $A(0,x)$ is needed outside $U$, since the source recovery argument is used only while the convex surfaces pass through $\mathcal D_\Gamma$.
		
		Equality of the partial Dirichlet data gives $V=0$ on $G$. The cone condition in \eqref{eq:D_partial}, with the initial time slice included as indicated in the definition of $\mathcal D_\Gamma$, gives the corresponding vanishing of the Neumann trace at the boundary points used for the Carleman propagation, by the exterior finite-propagation argument in the known exterior. Applying Theorem 3.2 in \cite{StefanovUhlmann2013SourceSpeed} locally along the family $\{\Sigma_s\}$ gives $\alpha=0$ in $\mathcal D_\Gamma$. Since the speeds are positive, this implies $c_1=c_2$ in $\mathcal D_\Gamma$. The constitutive relation then gives $f_1=f_2$ in $\mathcal D_\Gamma$. Hence the partial data determine the pair $(f,c)$ in the recoverable region.
	\end{proof}
	
	\begin{remark}[Locality of the constitutive assumption]
		\label{rem:local_constitutive}
		The preceding theorem does not require monotonicity of $r\mapsto\calF(x,r)$ throughout $\Om$. It requires monotonicity only on the part of the medium swept by the recoverable strictly convex surfaces, or more precisely on a neighborhood of that set. The speed and the initial source may remain unknown in the unswept region, and the constitutive law may even fail to be monotone there. What is essential is that the propagation of the Carleman argument from the observed boundary to the target set does not have to cross a region where the divided difference
		$$
		\frac{\calF(x,c_1)-\calF(x,c_2)}{c_1^2-c_2^2}
		$$
		can vanish. Thus the material non-degeneracy is a local requirement along the recoverable path, not a global structural condition on the whole object.
	\end{remark}
	
	For partial-data stability we impose a microlocal visibility condition together
with the exterior cone condition needed to recover Neumann data from the observed
Dirichlet trace. This is the partial-data analogue of the visibility condition used
in Theorem 3.4 in \cite{StefanovUhlmann2013SourceSpeed}, with the cone geometry of
Theorem 3.2 there.

\paragraph{Partial-data stability geometry.}
Let $K_0\Subset\mathcal D_\Gamma$. We say that $K_0$ satisfies the partial-data
stability geometry with respect to $g_1=c_1^{-2}dx^2$ and the observation set $G$
if there exist a relatively open set $\Gamma_1$ with $\overline{\Gamma_1}\subset\Gamma$ and a positive
continuous function $\tau_1$ on $\overline{\Gamma_1}$, with $\tau_1<\tau$ on $\overline{\Gamma_1}$, such
that the inner observation window
\[
        G_1=\{(t,y):y\in\Gamma_1,\ 0\le t<\tau_1(y)\}
\]
has the following two properties. First, with the same convex foliation used to define
$\mathcal D_\Gamma$, the inner observation window $G_1$
satisfies
\[
K_0\Subset\mathcal D_{\Gamma_1},
\qquad
K_0\subset\Sigma_{s_1}^{\rm int},
\]
where $\mathcal D_{\Gamma_1}$ is defined by
\eqref{eq:D_partial} with $(\Gamma,\tau)$ replaced by
$(\Gamma_1,\tau_1)$. Second,
for every $(x,\xi)\in S_{g_1}K_0$, at least one of the two unit-speed geodesics
$\gamma_{x,\xi}$ and $\gamma_{x,-\xi}$ reaches a point $y\in\Gamma_1$ at a time
$t$ with $0<t<\tau_1(y)$, and the arrival is transversal:
\[
        \gamma(t)=y,
        \qquad
        \dot\gamma(t)\notin T_y(\partial\Om).
\]
The transversality condition excludes glancing arrivals. The use of the smaller
window $G_1$ ensures that the arrivals used in the stability argument are separated
from the boundary of the observed set, while the inclusion of the initial time
slice in $G_1$ is used only for the exterior cone argument and for integrating
$\partial_t^2 Z$ in time.

\begin{proposition}
\label{prop:partial_source_stability}
Let $K_0\Subset\mathcal D_\Gamma$ be compact, and assume that $K_0$
satisfies the partial-data stability geometry with respect to
$g_1=c_1^{-2}dx^2$ and $G$. Let $\calA$ be a class of smooth coefficients
uniformly bounded in $C^N([0,T]\times\overline\Om)$, where $N$ is fixed
and sufficiently large, and assume that, for every $a\in\calA$,
$$
a_t(0,\cdot)=0,
\qquad
|a(0,x)|\geq b>0
\quad\text{for all }x\in K_0.
$$
Then there exists a constant $C>0$, independent of $a\in\calA$, such that,
for every $h\in L^2(\Om)$ with $\supp h\subset K_0$, the solution $Z$ of
\begin{equation}
\label{eq:partial_source_problem}
P_{c_1}Z=a(t,x)h(x),
\qquad
Z|_{t=0}=Z_t|_{t=0}=0,
\end{equation}
satisfies
\begin{equation}
\label{eq:partial_source_stability}
\|h\|_{L^2(\Om)}
\leq
C\|\partial_t^2Z\|_{L^2(G)}.
\end{equation}
\end{proposition}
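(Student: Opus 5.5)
We follow the structure of the proof of Theorem 3.4 in \cite{StefanovUhlmann2013SourceSpeed}: a microlocal estimate that holds modulo a compact error, followed by a uniqueness--compactness argument that removes the error.

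\emph{Duhamel reduction.} Set $W=\partial_t^2 Z$. Differentiating \eqref{eq:partial_source_problem} twice in time, and using $Z|_{t=0}=Z_t|_{t=0}=0$ together with $a_t(0,\cdot)=0$, shows that $W$ solves $P_{c_1}W=a_{tt}h$ with
\[
W|_{t=0}=a(0,x)h(x),\qquad W_t|_{t=0}=0 .
\]
By Duhamel's principle we can write
\[
W=W_0+\int_0^t U(t-s)\big(a_{tt}(s)h\big)\,ds,
\]
where $W_0$ is the homogeneous solution with Cauchy data $(a(0)h,0)$ and $U$ denotes the solution operator for zero initial displacement.

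\emph{Microlocal parametrix on the principal part.} We write $W_0=\tfrac12\big(e^{itQ}+e^{-itQ}\big)(a(0)h)$, where $Q=\sqrt{-c_1^2\Delta}$, modulo smoothing operators. Take $(x,\xi)\in S_{g_1}K_0$. By the partial-data stability geometry, one of the two bicharacteristics through $(x,\xi)$ reaches $\Gamma_1$ transversally at a time $t<\tau_1(y)$. The corresponding half-wave Fourier integral operator, composed with restriction to $G_1$, is therefore elliptic at $(x,\xi)$. Transversality is what makes the restriction to the boundary a well-behaved FIO, since there is no glancing. We then use compactness of $S_{g_1}K_0$ and a microlocal partition of unity, with cutoffs $\chi\in C_0^\infty(G)$ equal to $1$ on $G_1$; this is possible because $\overline{G_1}$ is contained in $G$, up to the time-zero slice. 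The result is the estimate
\[
\|a(0)h\|_{L^2}\le C\|\chi W_0\|_{L^2(G)}+C\|h\|_{H^{-1}}.
\]
The Duhamel integral is one order smoother in $h$, so it contributes at most $C\|h\|_{H^{-1}}$. Since $|a(0,\cdot)|\ge b$ on $K_0$, we conclude that
\[
\|h\|_{L^2}\le C\|\partial_t^2 Z\|_{L^2(G)}+C\|h\|_{H^{-1}}.
\]
All constants here depend only on finitely many $C^N$ seminorms of $a$, so they are uniform over $\calA$.

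\emph{Removing the compact error.} This is the step I expect to be the main obstacle, because the argument has to be uniform in $a\in\calA$. We argue by contradiction. Suppose there are $a_k\in\calA$ and $h_k$ with $\|h_k\|_{L^2}=1$, $\supp h_k\subset K_0$, and $\|\partial_t^2Z_k\|_{L^2(G)}\to0$. Passing to a subsequence, Arzel\`a--Ascoli (using the $C^N$ bounds) gives $a_k\to a$ in $C^{N-1}$, and $h_k\rightharpoonup h$ weakly in $L^2$, hence strongly in $H^{-1}$. The limit coefficient $a$ still satisfies $a_t(0)=0$ and $|a(0)|\ge b$ on $K_0$. The limit $h$ cannot vanish, because the estimate above gives $1\le C\|h_k\|_{H^{-1}}+o(1)$.

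Continuity of the source-to-solution map then gives $\partial_t^2Z=0$ on $G$ for the limit problem. Integrating twice in time from the initial slice, where $Z=Z_t=0$, gives $Z=0$ on $\{(t,y):y\in\Gamma_1,\ 0\le t<\tau_1\}$. The exterior cone argument, which uses $K_0\subset\mathcal D_{\Gamma_1}$, then gives $\partial_\nu Z=0$ at the points needed. The uniqueness result of Theorem~\ref{thm:partial_uniqueness}, namely Theorem 3.2 of \cite{StefanovUhlmann2013SourceSpeed} applied to the foliation with $\supp h\subset K_0\subset\Sigma_{s_1}^{\rm int}$, now forces $h=0$ on $\mathcal D_{\Gamma_1}\supset K_0$. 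This contradicts $h\neq0$.

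The delicate points in this step are the following:
\begin{itemize}[label=$\bullet$]
\item making the parametrix constants uniform in $a$;
\item checking that uniqueness applies with the merely $C^{N-1}$ limit coefficient;
\item handling the time-zero slice of $G_1$, which I would treat by even extension in time as in the full-data case.
\end{itemize}
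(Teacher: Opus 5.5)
Your proposal is correct and follows essentially the same route as the paper. Both arguments split $W=\partial_t^2Z$ by Duhamel, build an elliptic back-projection from the transversal visible branches reaching $G_1$ (with principal symbol carrying the factor $a(0,x)/2$), treat the parametrix error and the Duhamel term as a compact remainder, and remove that remainder using the exterior cone argument together with Theorem~3.2 of \cite{StefanovUhlmann2013SourceSpeed}.

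The only real difference is how you get uniformity over $\mathcal A$. The paper proves the estimate for each fixed $a$ by a Fredholm argument, then extends it using $C^2$-continuity in $a$ and a finite covering of the precompact class. Your compactness-contradiction argument over pairs $(a_k,h_k)$ does both steps at once. It needs uniqueness for limit coefficients in the closure of $\mathcal A$, as you flag, but the paper's covering step implicitly needs this too.
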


\begin{proof}
The proof combines the partial-data microlocal stability argument in
\cite[Theorem~3]{StefanovUhlmann2009} with the Duhamel and Fredholm
argument in \cite[Theorem~3.4]{StefanovUhlmann2013SourceSpeed}.
The partial-data uniqueness result
\cite[Theorem~3.2]{StefanovUhlmann2013SourceSpeed}
is used to remove the compact remainder.

Choose $\chi\in C^\infty([0,T]\times\partial\Om)$ such that
\[
0\leq\chi\leq1,
\qquad
\chi=1
\quad\text{in a neighborhood of }G_1,
\]
and
\[
\supp\chi
\subset
\{(t,y):y\in\Gamma,\ 0\leq t<\tau(y)\}.
\]
It is enough to prove the estimate with
$\|\chi\,\partial_t^2Z\|_{L^2(G)}$ on the right-hand side.

Set $W=\partial_t^2Z$. Differentiating the equation twice in time and using
the zero initial conditions for $Z$, together with $a_t(0,\cdot)=0$, gives
\[
P_{c_1}W=a_{tt}(t,x)h(x),
\qquad
W|_{t=0}=a(0,x)h(x),
\qquad
W_t|_{t=0}=0.
\]
The Duhamel formula separates $W$ into the homogeneous wave with initial
displacement $a(0,\cdot)h$ and the term generated by $a_{tt}h$.

We now localize the boundary back-projection microlocally. By the
partial-data stability geometry, for every
$(x,\xi)\in S_{g_1}K_0$, at least one of the two geodesic branches issued
from $(x,\xi)$ reaches $G_1$ transversally. By compactness of
$S_{g_1}K_0$, one may choose finitely many conic open sets
$\mathcal U_j\subset T^*K_0\setminus0$, together with corresponding
microlocal cutoffs on $T^*G$, such that on each $\mathcal U_j$ one
transversal branch reaching $G_1$ is selected. Applying the boundary
back-projection construction from the proof of
\cite[Theorem~3.4]{StefanovUhlmann2013SourceSpeed} to these localized data
and summing the resulting operators produces an order-zero
pseudodifferential operator on $K_0$.

Its principal symbol is the sum of the contributions of the selected
visible branches, multiplied by $a(0,x)/2$. Since every
$(x,\xi)\in S_{g_1}K_0$ belongs to at least one of the conic sets on which
a selected branch reaches $G_1$, where $\chi=1$, and since
\[
|a(0,x)|\geq b
\qquad\text{for }x\in K_0,
\]
the absolute value of the principal symbol is uniformly bounded below on
$T^*K_0\setminus0$. The resulting operator is therefore elliptic.

The corresponding parametrix construction gives
\[
h
=
R_a\bigl(\chi\,\partial_t^2Z|_G\bigr)+K_a h
\qquad\text{on }K_0,
\]
where
\[
R_a:L^2(G)\longrightarrow L^2(K_0)
\]
is bounded and
\[
K_a:L^2(K_0)\longrightarrow L^2(K_0)
\]
is compact. As in \cite{StefanovUhlmann2013SourceSpeed}, the compact
remainder consists of the negative-order parametrix error and the
back-projected Duhamel term.

It remains to show that the corresponding kernel is trivial. Suppose that
\[
\chi\,\partial_t^2Z|_G=0.
\]
Since $\chi=1$ on $G_1$, we have $\partial_t^2Z=0$ on $G_1$. The time
fibers of $G_1$ begin at $t=0$, and
\[
Z|_{t=0}=Z_t|_{t=0}=0;
\]
hence $Z=0$ on $G_1$. The exterior cone argument then gives the
corresponding vanishing of the Neumann trace at the boundary points used
in the partial-data propagation. Moreover, $a_t(0,\cdot)=0$ permits the
required even extension across $t=0$.

We may therefore apply the localized source uniqueness argument from the
proof of \cite[Theorem~3.2]{StefanovUhlmann2013SourceSpeed}. Because
$\supp h\subset K_0$, the nonvanishing condition on $a(0,\cdot)$ is needed
only on $K_0$: outside $K_0$ the spatial source factor already vanishes.
The convex-surface propagation and the exterior cone argument consequently
give
\[
h=0.
\]
Thus the kernel is trivial, and the compact term is removed by the usual
Fredholm argument. For each fixed $a$, we obtain
\[
\|h\|_{L^2(\Om)}
\leq
C_a\|\chi\,\partial_t^2Z\|_{L^2(G)}.
\]

Finally, the localized observation and back-projection operators depend
continuously on $a$ in the $C^2$ topology. Hence the estimate persists,
with a comparable constant, in a sufficiently small $C^2$ neighborhood
of each fixed coefficient. Since $\calA$ is uniformly bounded in a
sufficiently high $C^N$ norm, it is precompact in $C^2$. A finite covering
argument therefore makes the constant uniform for $a\in\calA$. Since
\[
\|\chi\,\partial_t^2Z\|_{L^2(G)}
\leq
\|\partial_t^2Z\|_{L^2(G)},
\]
the result follows.
\end{proof}

	\begin{theorem}
		\label{thm:partial_stability}
		Let $K_0\Subset\mathcal D_\Gamma$ be compact and assume that $K_0$ satisfies the
partial-data stability geometry with respect to $g_1=c_1^{-2}dx^2$ and $G$. Let
$c_1,c_2$ lie in a bounded smooth admissible class, set $\alpha=c_1^2-c_2^2$, and
set $f_j=\calF(\cdot,c_j)$, $j=1,2$. Assume
		\begin{equation}
			\label{eq:partial_stability_support}
			\supp(c_1-c_2)\subset K_0.
		\end{equation}
		Assume also that the local constitutive non-degeneracy condition
		$$
		|\partial_r\calF(x,r)|\ge m_U>0,
		\qquad x\in U,
		\quad r\in[c_-,c_+],
		$$
		holds in an open set $U$ containing $K_0$. Then
		\begin{equation}
			\label{eq:partial_stability}
			\|c_1-c_2\|_{L^2(K_0)}
			+
			\|f_1-f_2\|_{L^2(K_0)}
			\le
			C\|\Lam_{f_1,c_1}-\Lam_{f_2,c_2}\|_{L^2(G)}.
		\end{equation}
		The constant $C$ depends only on the source stability constants in Proposition~\ref{prop:partial_source_stability}, the admissible class, the lower speed bound $c_-$, and the Lipschitz norm of $\calF$ on the admissible range.
	\end{theorem}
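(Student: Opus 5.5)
The estimate will follow by reducing to Proposition~\ref{prop:partial_source_stability}, exactly as Theorem~\ref{thm:lipschitz_stability} was reduced to Theorem 3.4 of \cite{StefanovUhlmann2013SourceSpeed}.

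First, form $w=u_{c_1}-u_{c_2}$ and the time primitive $V$ of \eqref{eq:V_definition}. Lemma~\ref{lem:time_primitive} is a pointwise identity and does not use any global non-degeneracy. It gives
\[
P_{c_1}V=\alpha A,\qquad V|_{t=0}=V_t|_{t=0}=0,\qquad A(0,\cdot)=B,\qquad A_t(0,\cdot)=0.
\]
By \eqref{eq:partial_stability_support}, $\supp\alpha\subset K_0$. Because the local non-degeneracy holds on $U\supset K_0$, the divided-difference argument of \eqref{eq:divided_difference_preliminary} applies pointwise on $U$. This gives $|A(0,x)|=|B(x)|\ge b:=m_U/(2c_+)$ for $x\in K_0$. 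Outside $U$, $B$ is still defined by \eqref{eq:B_definition}, and since $\alpha=0$ there, its values are irrelevant. The only point needing care is smoothness of $A$ near $\partial U$; I would simply replace $B$ by a smooth function that agrees with it on a neighborhood of $K_0$, which leaves $\alpha A$ unchanged.

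Next, I check that the coefficients form a class $\calA$ as in Proposition~\ref{prop:partial_source_stability}. The a priori $C^N$ bounds on the admissible speeds, together with smoothness of $\calF$, give uniform $C^{N'}$ bounds on $B$ near $K_0$. Energy estimates for $u_{c_2}$, with $f_{c_2}$ uniformly smooth, give uniform bounds on $\int_0^t(t-s)\Delta u_{c_2}\,ds$ in $C^{N'}([0,T]\times\overline\Om)$. Hence $\{A\}$, indexed by $c_2$ with $c_1$ fixed, is uniformly bounded in high norm, satisfies $A_t(0,\cdot)=0$, and satisfies $|A(0,\cdot)|\ge b$ on $K_0$. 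I expect this uniformity to be the main technical obstacle. It is routine but requires tracking how much smoothness is lost; we must take the admissible $C^N$ bound large enough that the resulting $N'$ meets the proposition's requirement.

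Finally, apply Proposition~\ref{prop:partial_source_stability} with $h=\alpha$ and $a=A$. This gives $\|\alpha\|_{L^2(\Om)}\le C\|\partial_t^2V\|_{L^2(G)}$. By Lemma~\ref{lem:boundary_data}, which is purely a trace identity and therefore valid on $G$, we have $\partial_t^2V|_G=\Lam_{f_1,c_1}-\Lam_{f_2,c_2}$. The lower speed bound gives $|c_1-c_2|\le|\alpha|/(2c_-)$. The Lipschitz bound of $\calF$ in $r$ gives $|f_1-f_2|\le C|c_1-c_2|$ on $K_0$. Combining these three bounds yields \eqref{eq:partial_stability}, with a constant depending only on the listed quantities.
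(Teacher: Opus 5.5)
Your proposal is correct and follows the paper's proof essentially step for step: the time primitive $V$ from Lemma~\ref{lem:time_primitive}, the lower bound $|A(0,\cdot)|\ge m_U/(2c_+)$ on $K_0$ from the local divided-difference argument, the uniform coefficient class, Proposition~\ref{prop:partial_source_stability} applied with $h=\alpha$ and $a=A$, the trace identity $\partial_t^2V|_G=\Lam_{f_1,c_1}-\Lam_{f_2,c_2}$, and the pointwise bounds coming from $c_-$ and from the Lipschitz constant of $\calF$. Your extra step of modifying $B$ near $\partial U$ is harmless but unnecessary: the integral form of the divided difference in \eqref{eq:divided_difference_preliminary} shows that $B$ is smooth wherever $\calF$ is, whether or not non-degeneracy holds.
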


	\begin{proof} Set
		$$
		b_U:=\frac{m_U}{2c_+}.
		$$
		By Lemma~\ref{lem:time_primitive}, the second time primitive $V$ satisfies the source equation \eqref{eq:V_source_equation}; explicitly,
		$$
		P_{c_1}V=\alpha(x)A(t,x),
		\qquad
		V|_{t=0}=V_t|_{t=0}=0,
		\qquad
		\alpha=c_1^2-c_2^2.
		$$
		The support condition \eqref{eq:partial_stability_support} gives $\supp\alpha\subset K_0$. By the local constitutive non-degeneracy on $U$ and the divided-difference argument in Lemma~\ref{lem:comparison_factorization},
		$$
		|A(0,x)|\ge b_U>0,
		\qquad x\in K_0.
		$$
		Moreover, the a priori smooth bounds on the admissible class give
$A\in\mathcal C(K_0,T,b_U,M_A)$ for some $M_A$ depending only on the admissible
class, $T$, and $\calF$. For the fixed background speed $c_1$, as $c_2$ varies in the
bounded smooth admissible class, the corresponding coefficients
$A$ form a bounded smooth coefficient class of the type required
in Proposition~\ref{prop:partial_source_stability}.

		Applying Proposition~\ref{prop:partial_source_stability} to $Z=V$, $h=\alpha$, and $a=A$, we obtain
		$$
		\|\alpha\|_{L^2(K_0)}
		\le
		C\|\partial_t^2V\|_{L^2(G)}.
		$$
		By Lemma~\ref{lem:boundary_data},
		$$
		\partial_t^2V|_G
		=
		\Lam_{f_1,c_1}-\Lam_{f_2,c_2}.
		$$
		Thus
		$$ 
		\|c_1^2-c_2^2\|_{L^2(K_0)}
		\le
		C\|\Lam_{f_1,c_1}-\Lam_{f_2,c_2}\|_{L^2(G)}.
		$$
		Since $c_j\ge c_->0$,
		$$
		|c_1-c_2|
		\le
		\frac{1}{2c_-}|c_1^2-c_2^2|.
		$$
		Finally, since $f_j=\calF(\cdot,c_j)$ and $\calF$ is uniformly Lipschitz in the sound-speed variable on the admissible range,
		$$
		|f_1(x)-f_2(x)|
		\le C|c_1(x)-c_2(x)|.
		$$
		Combining the last estimates proves \eqref{eq:partial_stability}.
	\end{proof}
	
	\section*{Acknowledgments}

The author used ChatGPT (OpenAI) to assist with language editing.

\end{document}